\documentclass{article}

\usepackage{fancyhdr}
\usepackage{setspace, amsmath, amsthm, amssymb, amsfonts, amscd, epic, graphicx, ulem, dsfont}
\usepackage{amsthm}
\usepackage{arxiv}

\usepackage[utf8]{inputenc} 
\usepackage[T1]{fontenc}    
\usepackage{hyperref}       
\usepackage{url}            
\usepackage{booktabs}       
\usepackage{amsfonts}       
\usepackage{nicefrac}       
\usepackage{microtype}      
\usepackage{lipsum}
\usepackage{graphicx}
\graphicspath{ {./images/} }
\newtheorem{theorem}{Theorem}

\newtheorem{proposition}[theorem]{Proposition}
\newtheorem{corollary}[theorem]{Corollary}
\newtheorem{example}[theorem]{Example}

\newtheorem{remark}[theorem]{Remark}

\title{On the generalized of $p$-harmonic maps}

\author{Bouchra Merdji \\
Mascara University, Faculty of Exact Sciences, Mascara 29000, Algeria\\
\texttt{bouchra.merdji@univ-mascara.dz}
\And Ahmed Mohammed Cherif\\
Mascara University, Faculty of Exact Sciences, Mascara 29000, Algeria\\
\texttt{a.mohammedcherif@univ-mascara.dz}}


\begin{document}

\maketitle

\begin{abstract}
In this paper, we extend the definition of $p$-harmonic and $p$-biharmonic maps between
Riemannian manifolds. We present some new properties for the generalized stable $p$-harmonic maps.\\
 \textit{ keywords:} $p$-Harmonic maps, $p$-Biharmonic maps.\\
\textit{ Mathematics Subject Classification 2020:} 53C20, 58E20.
\end{abstract}

\section{Introduction}
Consider a smooth map $\varphi:(M,g)\longrightarrow (N,h)$ between Riemannian manifolds, and let $p$ be a smooth positive function on $M$ such that $p(x)\geq2$
for all $x\in M$. For any compact domain $D$ of $M$ the $p(\cdot)$-energy functional of $\varphi$ is defined by
\begin{equation}\label{eq1.1}
E_{p(\cdot)}(\varphi;D)=\int_{D}\frac{|d\varphi|^{p(x)}}{p(x)}v_{g},
\end{equation}
where $|d\varphi|$ is the Hilbert-Schmidt norm of the differential $d\varphi$ and $v^g$ is the volume element on $(M,g)$.
A map is called $p(\cdot)$-harmonic if it is a critical point of the $p(\cdot)$-energy functional over any compact subset $D$ of $M$.
$p(\cdot)$-harmonic maps is a natural generalization of harmonic map (\cite{baird, ES}) and $p$-harmonic map (\cite{BG,BI,ali}).
We denote by
\begin{equation}\label{eq1.2}
    \tau(\varphi)=\operatorname{trace}_g\nabla d\varphi
    =\sum_{i=1}^m\big\{\nabla^\varphi _{e_i}d\varphi(e_i)-d\varphi(\nabla^M _{e_i} e_i)\big\}.
\end{equation}
the tension field of $\varphi$, where $\{e_i\}_{i=1}^{m}$ is an orthonormal frame on $(M,g)$, $\nabla^{M}$ is the Levi-Civita connection of $(M,g)$,
and $\nabla^{\varphi}$ denote the pull-back connection on $\varphi^{-1}TN$.\\
In this paper, we investigate some properties for $p(\cdot)$-harmonic maps between two Riemannian manifolds. In particular,
we present the first and the second variation of the $p(\cdot)$-energy. We also extend the definition of $p$-biharmonic maps between two
Riemannian manifolds (\cite{cherif2}).

\section{$p(\cdot)$-Harmonic Maps}

\begin{theorem}[The first variation of the $p(\cdot)$-energy]\label{th1}
Let $\varphi:(M,g)\rightarrow (N,h)$ be a smooth map between two Riemannian manifolds,
$\{\varphi_{t}\}_{t\in (-\epsilon,\epsilon)}$ a smooth variation of $\varphi$ supported in compact domain $D$ of $M$. Then
\begin{equation}\label{eq2.1}
    \frac{d}{dt}E_{p(\cdot)}(\varphi_{t};D)\Big|_{t=0}=-\int_{D}h(v,\tau_{p(\cdot)}(\varphi))v_{g},
\end{equation}
where $\tau_{p(\cdot)}(\varphi)$ denotes the $p(\cdot)$-tension field of $\varphi$ given by
\begin{align}\label{eq2.2}
 \tau_{p(\cdot)}(\varphi)&=\operatorname{trace}_g\nabla|d\varphi|^{p(x)-2} d\varphi,
 \end{align}
and $v=\frac{d\varphi_{t}}{dt}\big|_{t=0}$ denotes the variation vector field of $\{\varphi_{t}\}_{t\in (-\epsilon,\epsilon)}$.
\end{theorem}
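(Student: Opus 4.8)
The plan is to differentiate the energy density pointwise in $t$, use the symmetry of the second fundamental form to convert the $t$-derivative into a spatial operation, and then integrate by parts via the divergence theorem. First I would extend the variation to a smooth map $\Phi:M\times(-\epsilon,\epsilon)\to N$, $\Phi(x,t)=\varphi_{t}(x)$, and work with the pull-back connection $\nabla^{\Phi}$ on $\Phi^{-1}TN$. Since $p(x)$ does not depend on $t$, writing $|d\varphi_{t}|^{p(x)}=(|d\varphi_{t}|^{2})^{p(x)/2}$ and differentiating gives $\frac{d}{dt}\frac{|d\varphi_{t}|^{p(x)}}{p(x)}=\tfrac{1}{2}|d\varphi_{t}|^{p(x)-2}\frac{d}{dt}|d\varphi_{t}|^{2}$, so the factor $1/p(x)$ is exactly what cancels the exponent and leaves the clean weight $|d\varphi_{t}|^{p(x)-2}$.

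Next I would compute $\frac{d}{dt}|d\varphi_{t}|^{2}$. Writing $|d\varphi_{t}|^{2}=\sum_{i}h(d\Phi(e_{i}),d\Phi(e_{i}))$ and using compatibility of $\nabla^{\Phi}$ with $h$, this equals $2\sum_{i}h(\nabla^{\Phi}_{\partial_{t}}d\Phi(e_{i}),d\Phi(e_{i}))$. The key identity here is the symmetry lemma $\nabla^{\Phi}_{\partial_{t}}d\Phi(e_{i})=\nabla^{\Phi}_{e_{i}}d\Phi(\partial_{t})$, valid because the Levi-Civita connection $\nabla^{N}$ of $(N,h)$ is torsion-free and $[\partial_{t},e_{i}]=0$. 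Setting $t=0$ and $v=d\Phi(\partial_{t})|_{t=0}$ then yields $\frac{d}{dt}E_{p(\cdot)}(\varphi_{t};D)|_{t=0}=\int_{D}|d\varphi|^{p(x)-2}\sum_{i}h(\nabla^{\varphi}_{e_{i}}v,d\varphi(e_{i}))\,v_{g}$.

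Finally I would integrate by parts. I introduce the vector field $Y$ on $M$ determined by $g(Y,X)=|d\varphi|^{p(x)-2}h(v,d\varphi(X))$ and compute $\operatorname{div}Y$ in a normal frame at an arbitrary point. Applying the Leibniz rule to the section $|d\varphi|^{p(x)-2}d\varphi$ of $T^{*}M\otimes\varphi^{-1}TN$ shows that $\operatorname{trace}_{g}\nabla(|d\varphi|^{p(x)-2}d\varphi)$ is precisely $\tau_{p(\cdot)}(\varphi)$, and the computation gives $\operatorname{div}Y=h(v,\tau_{p(\cdot)}(\varphi))+|d\varphi|^{p(x)-2}\sum_{i}h(\nabla^{\varphi}_{e_{i}}v,d\varphi(e_{i}))$. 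Since $v$, and hence $Y$, is supported in the interior of $D$, the divergence theorem kills $\int_{D}\operatorname{div}Y\,v_{g}$, leaving exactly $-\int_{D}h(v,\tau_{p(\cdot)}(\varphi))\,v_{g}$.

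The step I expect to be the main obstacle is this integration-by-parts identity, specifically handling the $x$-dependence of $p$. Differentiating the scalar weight produces a term proportional to $\nabla p$, namely $e_{i}(|d\varphi|^{p(x)-2})=|d\varphi|^{p(x)-2}\big(\ln|d\varphi|\,e_{i}(p)+(p-2)\,e_{i}(|d\varphi|)/|d\varphi|\big)$, which has no analogue in the constant-$p$ theory. The cleanest way to avoid splitting off this awkward term by hand is to keep $|d\varphi|^{p(x)-2}$ as a single scalar function throughout and let the Leibniz rule for $\nabla$ package it automatically into $\tau_{p(\cdot)}(\varphi)$; the only remaining care is to verify that the weighted field $Y$ genuinely has compact support in $D$ so that the boundary term vanishes.
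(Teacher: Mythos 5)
Your proof is correct and takes essentially the same route as the paper's: differentiate under the integral so the $1/p(x)$ factor cancels, use the symmetry of the pull-back connection (torsion-freeness plus $[\partial_t,(e_i,0)]=0$), and integrate by parts via the divergence theorem, where your vector field $Y$ is precisely the metric dual of the $1$-form $\omega(X)=h(v,|d\varphi|^{p(x)-2}d\varphi(X))$ that the paper uses. Your closing observation --- keeping $|d\varphi|^{p(x)-2}$ bundled with $d\varphi$ so the Leibniz rule packages everything into $\tau_{p(\cdot)}(\varphi)$ rather than expanding the derivative of the weight --- is exactly how the paper handles the $x$-dependence of $p$ as well.
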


\begin{proof}
  Let $\phi :M\times (-\epsilon,\epsilon)\longrightarrow N$ be a smooth map defined by
  $$\phi(x,t)=\varphi_{t}(x),\quad\forall (x,t)\in M\times (-\epsilon,\epsilon).$$
  We have $\phi (x,0)=\varphi(x)$ for all $x\in M$, and the variation vector field associated to the variation $\{\varphi_{t}\}_{t\in(-\epsilon,\epsilon)}$ is given by
  $$v(x)=d_{(x,0)}\phi(\frac{\partial}{\partial t})\Big|_{t=0},\quad\forall x\in M.$$
  Let $\{e_i\}_{i=1}^{m}$ be an orthonormal frame on $(M,g)$. We compute
  \begin{align}
  \frac{d}{dt}E_{p(\cdot)}(\varphi_{t};D)\Big|_{t=0}
   &= \nonumber\frac{d}{dt}\int_{D}\frac{|d\varphi_{t}|^{p(x)}}{p(x)}v_{g}\bigg|_{t=0}\\ \nonumber
   &=\int_{D}\frac{1}{p(x)}\frac{\partial}{\partial t}|d\varphi_{t}|^{p(x)}\bigg|_{t=0}v_{g} \\
   &=\nonumber\int_{D}\frac{1}{p(x)}\frac{\partial}{\partial t}(|d\varphi_{t}|^{2})^{\frac{p(x)}{2}}\bigg|_{t=0}v_{g}\\
   &=\nonumber\sum_{i=1}^m\int_{D}\frac{1}{p(x)}\frac{\partial }{\partial t}h(d\varphi_{t}(e_i),d\varphi_{t}(e_i))^{\frac{p(x)}{2}}\bigg|_ {t=0}v_{g}\\
   &=  \nonumber\sum_{i=1}^m\int_{D}\frac{1}{p(x)}\frac{\partial}{\partial t}h(d\phi(e_i,0),d\phi(e_i,0))^{\frac{p(x)}{2}}\bigg|_{t=0}v_g\\
   &=\label{eq2.3} \sum_{i=1}^m\int_{D}h(\nabla _{\frac{\partial}{\partial t}}^{\phi}d\phi(e_i,0),d\phi(e_i,0))(|d\varphi_{t}|^{2})^{\frac{p(x)}{2}-1}\bigg|_{t=0} v_{g}.
   \end{align}
  By using the property $$\nabla^\phi_X d\phi(Y)=\nabla^\phi_Y d\phi(X)+d\phi([X,Y]),$$
  with $X=\frac{\partial}{\partial t}$, $Y=(e_i,0)$, and $[\frac{\partial}{\partial t},(e_i,0)]=0$, the equation (\ref{eq2.3}) becomes
  \begin{align}\label{eq2.4}
     \frac{d}{dt}E_{p(\cdot)}(\varphi_{t};D)\bigg|_{t=0}
     &= \nonumber\sum_{i=1}^m\int_{D}h(\nabla^{\phi}_{(e_i,0)}d\phi(\frac{\partial}{\partial t}),d\phi(e_i,0))|d\varphi_{t}|^{p(x)-2}\bigg|_{t=0}v_{g} \\
     &=\nonumber\sum_{i=1}^m\int_{D}h(\nabla^{\varphi}_{e_i}v,|d\varphi|^{p(x)-2}d\varphi(e_i))v_g\\
     &=\nonumber\sum_{i=1}^m\int_{D}\Big[e_i h(v,|d\varphi|^{p(x)-2}d\varphi(e_i))\\
     &\quad-h(v,\nabla^{\varphi}_{e_i}|d\varphi|^{p(x)-2}d\varphi(e_i))\Big]v_g.
  \end{align}
Let $\omega \in\Gamma(T^{*}M)$ defined by
\begin{equation*}
\omega (X)=h(v,|d\varphi|^{p(x)-2}d\varphi(X)),\quad \forall X\in\Gamma(TM)
\end{equation*}
The divergence of $\omega$ is given by
\begin{equation}\label{eq2.5}
\operatorname{div}^{M}\omega=\sum_{i=1}^m\left[e_ih(v,|d\varphi|^{p(x)-2}d\varphi(e_i))-h(v,|d\varphi|^{p(x)-2}d\varphi(\nabla^{M} _{e_i} e_i))\right].
\end{equation}
By equations (\ref{eq2.4}), (\ref{eq2.5}), and the divergence Theorem \cite{baird}, we get
\begin{align}\label{eq2.6}
   \frac{d}{dt}E_{p(\cdot)}(\varphi_{t};D)\bigg|_{t=0} =& \nonumber
    \sum_{i=1}^m\int_{D} h\big(v,|d\varphi|^{p(x)-2}d\varphi(\nabla ^{M} _{e_i} e_i)
    -\nabla^{\varphi}_{e_i}|d\varphi|^{p(x)-2}d\varphi(e_i)\big)v_{g}\\
=&-\sum_{i=1}^m\int_{D} h\left(v,\left[\nabla^{}_{e_i}|d\varphi|^{p(x)-2}d\varphi\right](e_i)\right)v_{g}.
\end{align}
\end{proof}

\begin{corollary}
A smooth map $\varphi:(M,g)\longrightarrow (N,h)$ between two Riemannian manifolds is $p(\cdot)$-harmonic if and only if
$$\tau_{p(\cdot)}(\varphi)=|d\varphi|^{p(x)-2}\tau(\varphi)+d\varphi(\operatorname{grad}^M|d\varphi|^{p(x)-2})=0.$$
\end{corollary}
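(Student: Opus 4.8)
The plan is to read off the Euler--Lagrange equation from Theorem \ref{th1} and then expand the $p(\cdot)$-tension field algebraically. First I would note that by Theorem \ref{th1}, for every smooth variation supported in a compact domain $D$,
$$\frac{d}{dt}E_{p(\cdot)}(\varphi_t;D)\Big|_{t=0}=-\int_D h(v,\tau_{p(\cdot)}(\varphi))v_g,$$
so $\varphi$ is a critical point of $E_{p(\cdot)}(\cdot;D)$ for every $D$ precisely when this integral vanishes for all admissible variation vector fields $v$. Since any compactly supported section $v\in\Gamma(\varphi^{-1}TN)$ arises as the variation field of some variation of $\varphi$, the fundamental lemma of the calculus of variations forces $\tau_{p(\cdot)}(\varphi)=0$; conversely, that condition clearly annihilates every first variation. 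Hence $\varphi$ is $p(\cdot)$-harmonic if and only if $\tau_{p(\cdot)}(\varphi)=0$.

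It then remains to rewrite $\tau_{p(\cdot)}(\varphi)=\operatorname{trace}_g\nabla|d\varphi|^{p(x)-2}d\varphi$ in the stated form. Writing $f=|d\varphi|^{p(x)-2}$, a smooth function on $M$, I would regard $f\,d\varphi$ as a section of $T^*M\otimes\varphi^{-1}TN$ and apply the defining formula for its covariant derivative together with the Leibniz rule: for an orthonormal frame $\{e_i\}_{i=1}^m$ on $(M,g)$,
$$(\nabla_{e_i}(f\,d\varphi))(e_i)=\nabla^\varphi_{e_i}\big(f\,d\varphi(e_i)\big)-f\,d\varphi(\nabla^M_{e_i}e_i)=(e_if)\,d\varphi(e_i)+f\big[\nabla^\varphi_{e_i}d\varphi(e_i)-d\varphi(\nabla^M_{e_i}e_i)\big].$$
Summing over $i$ and taking the trace, the bracketed terms assemble into $f\,\tau(\varphi)$ by the definition \eqref{eq1.2} of the tension field.

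For the remaining terms I would use that in an orthonormal frame $\operatorname{grad}^M f=\sum_{i=1}^m(e_if)e_i$, so that $\sum_{i=1}^m(e_if)\,d\varphi(e_i)=d\varphi(\operatorname{grad}^M f)$ by linearity of $d\varphi$. Combining the two contributions yields
$$\tau_{p(\cdot)}(\varphi)=|d\varphi|^{p(x)-2}\tau(\varphi)+d\varphi(\operatorname{grad}^M|d\varphi|^{p(x)-2}),$$
and the corollary follows from the equivalence established in the first step. The computation is essentially routine; the only point that deserves a little care is keeping the product rule honest for the scalar factor carrying the variable exponent $p(x)$, so that $\operatorname{grad}^M f$ is understood to include the contribution of $\operatorname{grad}^M p$, which nevertheless leaves the displayed identity unchanged.
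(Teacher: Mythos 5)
Your proposal is correct and follows exactly the paper's (implicit) reasoning: the paper states this corollary without proof as an immediate consequence of Theorem \ref{th1}, and your two steps --- the fundamental lemma of the calculus of variations applied to the first variation formula, then the Leibniz expansion of $\operatorname{trace}_g\nabla\,|d\varphi|^{p(x)-2}d\varphi$ into $|d\varphi|^{p(x)-2}\tau(\varphi)+d\varphi(\operatorname{grad}^M|d\varphi|^{p(x)-2})$ --- are precisely the intended argument. Your closing remark about the variable exponent is also right: since $|d\varphi|^{p(x)-2}$ is treated as a single scalar function on $M$, its gradient automatically absorbs the $\operatorname{grad}^M p$ contribution and the identity is unaffected.
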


\begin{example}
The restriction of inversion $$\varphi:\mathbb{R}^n\backslash\{0\}\longrightarrow\mathbb{R}^n\backslash\{0\},\quad x\longmapsto\frac{x}{\|x\|^2},$$
to $M=\{x\in\mathbb{R}^n\backslash\{0\},\,\|x\|^2>\sqrt{n}\}$ is $p(\cdot)$-harmonic map, where the function $p$ is given by
$$p(x)=n+\frac{c}{2\ln(\|x\|^2)-\ln(n)},\quad\forall x\in M,$$
for some constant $c\geq0$. Here, $|d\varphi|(x)=\frac{\sqrt{n}}{\|x\|^2}$ for all $x\in\mathbb{R}^n\backslash\{0\}$.
\end{example}

\begin{example}
Let $F:\mathbb{R}\longrightarrow[2,\infty)$ be a smooth function. The map $$\varphi:\mathbb{R}^n\backslash\{0\}\longrightarrow\mathbb{S}^n,\quad x\longmapsto\frac{x}{\|x\|},$$ is $p(\cdot)$-harmonic, where $p(x)=F(\|x\|^2)$ for all $x\in\mathbb{R}^n\backslash\{0\}$. The Hilbert-Schmidt norm of  $d\varphi$
is given by $|d\varphi|(x)=\frac{\sqrt{n-1}}{\|x\|}$ for all $x\in\mathbb{R}^n\backslash\{0\}$.
\end{example}

\begin{remark}
A smooth harmonic map, i.e., $\tau(\varphi)=0$, with constant energy density $\frac{|d\varphi|^2}{2}$ is not always $p(\cdot)$-harmonic.
The previous examples prove the following results;
There is no equivalence between the $p(\cdot)$-harmonicity and the harmonicity of a smooth map $\varphi:(M,g)\longrightarrow (N,h)$.
There are $p(\cdot)$-harmonic maps which have non-constant Hilbert-Schmidt norm and they are not harmonic.
\end{remark}

\section{Stable $p(\cdot)$-Harmonic Maps}
\begin{theorem}[The second variation of the $p(\cdot)$-energy]
Let $\varphi$ be a smooth $p(\cdot)$-harmonic map between two Riemannian manifolds $(M,g)$ and $(N,h)$. Then we have
\begin{equation}\label{eq3.1}
\frac{\partial ^{2}}{\partial t \partial s}E_{p(\cdot)}(\varphi_{t,s};D)\bigg|_{t=s=0}=\int_{D}h(J^{\varphi}_{p(\cdot)}(v),w)v_g,
\end{equation}
where $\{\varphi_{t,s} \}_{(t,s)\in (-\epsilon,\epsilon)\times (-\epsilon,\epsilon)}$ is a smooth variation supported in compact domain $D \subset M$ of $\varphi$,
\begin{equation}\label{eq3.2}
 v=\frac{\partial \varphi_{t,s}}{\partial t}\bigg|_{t=s=0}, \quad w=\frac{\partial \varphi_{t,s}}{\partial s}\bigg|_{t=s=0},
\end{equation}
and $J_{p(\cdot)}^{\varphi}$ the generalized Jacobi operator of $\varphi$ given by
\begin{align}\label{eq3.3}
J_{p(\cdot)}^{\varphi}(v)
=&\nonumber-| d\varphi|^{p(x)-2} \operatorname{trace}_{g}R^{N}(v,d\varphi)d\varphi
-\operatorname{trace}_{g}\nabla^{\varphi}| d\varphi| ^{p(x)-2}\nabla^{\varphi}v\\
&-\operatorname{trace}_g \nabla (p(x)-2) |d\varphi |^{p(x)-4} \langle\nabla^{\varphi}v,d\varphi\rangle d\varphi.
\end{align}
Here $\langle\,,\,\rangle$ denote the inner product on $T^{*}M \otimes \varphi^{-1}TN$.
\end{theorem}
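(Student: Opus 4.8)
The plan is to mimic the derivation of the first variation (Theorem \ref{th1}), now for a two-parameter family. Set $\Phi:M\times(-\epsilon,\epsilon)\times(-\epsilon,\epsilon)\to N$, $\Phi(x,t,s)=\varphi_{t,s}(x)$, write $\partial_t,\partial_s$ for the coordinate fields, and fix an orthonormal frame $\{e_i\}$ on $(M,g)$. Repeating the computation (\ref{eq2.3}) with $s$ in place of $t$ gives the pre-integration form
\[
\frac{\partial}{\partial s}E_{p(\cdot)}(\varphi_{t,s};D)=\sum_{i=1}^m\int_D h\big(\nabla^\Phi_{\partial_s}d\Phi(e_i),d\Phi(e_i)\big)|d\Phi|^{p(x)-2}\,v_g .
\]
The key point is that $p=p(x)$ does not depend on $(t,s)$, so differentiating once more in $t$ and setting $t=s=0$ produces only the three terms of the product rule applied to the factors $h(\nabla^\Phi_{\partial_s}d\Phi(e_i),d\Phi(e_i))$ and $|d\Phi|^{p(x)-2}$. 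I would record the three integrands, using throughout the symmetry $\nabla^\Phi_X d\Phi(Y)=\nabla^\Phi_Y d\Phi(X)$ (valid since $[\partial_t,e_i]=[\partial_s,e_i]=0$) and $d\Phi(\partial_t)|_0=v$, $d\Phi(\partial_s)|_0=w$: namely $(1)$ $h(\nabla^\Phi_{\partial_t}\nabla^\Phi_{\partial_s}d\Phi(e_i),d\varphi(e_i))|d\varphi|^{p-2}$, $(2)$ $h(\nabla^\varphi_{e_i}w,\nabla^\varphi_{e_i}v)|d\varphi|^{p-2}$, and $(3)$ $h(\nabla^\varphi_{e_i}w,d\varphi(e_i))\,\frac{\partial}{\partial t}|d\Phi|^{p-2}\big|_0$.

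For term $(1)$ I would commute derivatives: writing $\nabla^\Phi_{\partial_s}d\Phi(e_i)=\nabla^\Phi_{e_i}d\Phi(\partial_s)$ and using the curvature of the pull-back connection, $\nabla^\Phi_{\partial_t}\nabla^\Phi_{e_i}d\Phi(\partial_s)=\nabla^\Phi_{e_i}\nabla^\Phi_{\partial_t}d\Phi(\partial_s)+R^N(d\Phi(\partial_t),d\Phi(e_i))d\Phi(\partial_s)$. At $t=s=0$ this splits $(1)$ into a curvature piece $h(R^N(v,d\varphi(e_i))w,d\varphi(e_i))|d\varphi|^{p-2}$ and an ``acceleration'' piece $h(\nabla^\varphi_{e_i}V,d\varphi(e_i))|d\varphi|^{p-2}$ with $V:=\nabla^\Phi_{\partial_t}d\Phi(\partial_s)|_0$. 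Applying the pair symmetry $h(R^N(X,Y)Z,W)=h(R^N(Z,W)X,Y)$ and antisymmetry in the first two slots, the curvature piece becomes $-|d\varphi|^{p-2}h(\operatorname{trace}_gR^N(v,d\varphi)d\varphi,w)$, the first term of (\ref{eq3.3}). The acceleration piece I would dispatch by integrating by parts against the $1$-form $\eta(X)=|d\varphi|^{p-2}h(V,d\varphi(X))$: its divergence reassembles $h(V,\tau_{p(\cdot)}(\varphi))$, which vanishes since $\varphi$ is $p(\cdot)$-harmonic (Corollary), so this contributes nothing after integration over $D$.

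For terms $(2)$ and $(3)$ I would integrate by parts so as to move all derivatives off $w$ and pair the outcome with $w$. For $(2)$, the divergence theorem applied to $\zeta(X)=|d\varphi|^{p-2}h(w,\nabla^\varphi_X v)$ converts $\int_D|d\varphi|^{p-2}\sum_i h(\nabla^\varphi_{e_i}w,\nabla^\varphi_{e_i}v)\,v_g$ into $-\int_D h\big(w,\operatorname{trace}_g\nabla(|d\varphi|^{p-2}\nabla^\varphi v)\big)v_g$, the second term of (\ref{eq3.3}). For $(3)$, I would first compute $\frac{\partial}{\partial t}|d\Phi|^{p-2}\big|_0=(p-2)|d\varphi|^{p-4}\langle\nabla^\varphi v,d\varphi\rangle$ from $\frac{\partial}{\partial t}|d\Phi|^2=2\sum_j h(\nabla^\varphi_{e_j}v,d\varphi(e_j))$, so that $(3)$ reads $f\,\langle\nabla^\varphi w,d\varphi\rangle$ with $f=(p-2)|d\varphi|^{p-4}\langle\nabla^\varphi v,d\varphi\rangle$. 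Integrating by parts against $\xi(X)=f\,h(w,d\varphi(X))$ turns its integral into $-\int_D h\big(w,\operatorname{trace}_g\nabla(f\,d\varphi)\big)v_g$, i.e.\ the third term of (\ref{eq3.3}). Collecting the three contributions yields (\ref{eq3.1}).

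The main obstacle is bookkeeping rather than conceptual. The two delicate points are: $(i)$ applying the Riemann symmetries in the right order so the curvature term emerges as $R^N(v,d\varphi)d\varphi$ paired with $w$, and not as a permuted variant; and $(ii)$ observing that the acceleration term $V=\nabla^\Phi_{\partial_t}d\Phi(\partial_s)|_0$ is annihilated by precisely the $p(\cdot)$-harmonicity identity $|d\varphi|^{p-2}\tau(\varphi)+d\varphi(\operatorname{grad}^M|d\varphi|^{p-2})=0$, and not by plain harmonicity. A useful check is that the two integrations by parts in the last paragraph retain the $\tau(\varphi)$-contributions inside $\operatorname{trace}_g\nabla(f\,d\varphi)$, so no harmonicity is invoked there; harmonicity is used only to discard $V$. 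Finally one uses that every divergence term integrates to zero because the variation is supported in the compact domain $D$.
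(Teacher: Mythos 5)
Your proposal is correct and follows essentially the same route as the paper: the same product-rule decomposition into three terms, the same curvature commutation $\nabla^\Phi_{\partial_t}\nabla^\Phi_{e_i}d\Phi(\partial_s)=\nabla^\Phi_{e_i}\nabla^\Phi_{\partial_t}d\Phi(\partial_s)+R^N(d\Phi(\partial_t),d\Phi(e_i))d\Phi(\partial_s)$, and integration by parts against exactly the three $1$-forms the paper calls $\omega_1,\omega_2,\omega_3$, with $p(\cdot)$-harmonicity used precisely to discard the acceleration term $\nabla^\Phi_{\partial_t}d\Phi(\partial_s)\big|_{t=s=0}$ paired with $\tau_{p(\cdot)}(\varphi)$.
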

\begin{proof}
Let $\phi :M\times (-\epsilon,\epsilon)\times(-\epsilon,\epsilon)\longrightarrow N$ be a smooth map defined by $\phi(x,t,s)=\varphi_{t,s}(x)$. We have $\phi(x,0,0)=\varphi(x)$, and the variation vectors fields associated to the variation $\{\varphi_{t,s}\}_{(t,s)\in(-\epsilon,\epsilon)\times(-\epsilon,\epsilon)}$ are given by
	\begin{equation}\label{eq3.4}
	v(x)=d_{(x,0,0)}\phi(\frac{\partial}{\partial t}), \quad w(x)=d_{(x,0,0)}\phi(\frac{\partial}{\partial s}), \quad \forall x \in M.
	\end{equation}
Let $\{e_i\}_{i=1}^{m}$ be an orthonormal frame with respect to $g$ on $M$ such that $\nabla ^{M}_{e_i}e_j=0$ at $x\in M$ for all $i,j=1,\dots,m $. We compute
\begin{eqnarray}\label{eq3.5}
\frac{\partial ^{2}}{\partial t\partial s}E_{p(\cdot)}(\varphi_{t,s};D)\bigg|_{t=s=0}
&=&\nonumber\frac{\partial^{2}}{\partial t\partial s}\int_{D}\frac{|d\varphi_{t,s}|^{p(x)}}{p(x)} v_g \bigg|_{t=s=0}\\
&=&\int_{D}\frac{1}{p(x)} \frac{\partial^{2}}{\partial t\partial s}|d\varphi_{t,s}|^{p(x)}\bigg|_{t=s=0}v_g .
\end{eqnarray}	
First, note that
\begin{align*}
\frac{1}{p(x)}\frac{\partial ^{2}}{\partial t \partial s}|d\varphi_{t,s} |^{p(x)}
&= \frac{1}{p(x)}\frac{\partial }{\partial t}\left( \frac{\partial}{\partial s} (|d\varphi _ {t,s}|^{2})^{\frac{p(x)}{2}}\right)\\
&=\frac{1}{2}\frac{\partial}{\partial t}\left( (|d\varphi_{t,s}|^{2})^{\frac{p(x)}{2}-1}\frac{\partial}{\partial s}|d\varphi_{t,s}|^{2}\right)\\
&=\sum_{i=1}^m\frac{\partial}{\partial t}\left( (|d\varphi_{t,s}|^{2})^{\frac{p(x)}{2}-1}h(\nabla^\phi_{\frac{\partial}{\partial s}}d\phi(e_i,0,0),d\phi(e_i,0,0))\right).
\end{align*}
Thus
	\begin{align*}
	\frac{1}{p(x)}\frac{\partial ^{2}}{\partial t \partial s}|d\varphi_{t,s} |^{p(x)}
&=\sum_{i=1}^m\frac{\partial}{\partial t}\left(|d\varphi_{t,s}|^{2} \right)^{\frac{p(x)}{2}-1}h(\nabla^\phi_{\frac{\partial}{\partial s}}d\phi(e_i,0,0),d\phi(e_i,0,0))\\
&+\sum_{i=1}^m|d\varphi_{t,s}|^{p(x)-2}h(\nabla^\phi_{\frac{\partial }{\partial t}}\nabla^\phi _{\frac{\partial}{\partial s}}d\phi(e_i,0,0),d\phi(e_i,0,0))\\
&+\sum_{i=1}^m|d\varphi_{t,s}|^{p(x)-2}h(\nabla^{\phi}_{\frac{\partial}{\partial s}}d\phi(e_i,0,0),\nabla^{\phi}_{\frac{\partial}{\partial t}}d\phi(e_i,0,0)).
\end{align*}
So that
\begin{align*}
\frac{1}{p(x)}\frac{\partial ^{2}}{\partial t \partial s}|d\varphi_{t,s} |^{p(x)}	
&=\sum_{i,j=1}^m(p(x)-2)|d\varphi_{t,s}|^{p(x)-4}h(\nabla^{\phi}_{\frac{\partial}{\partial t}}d\phi(e_j,0,0),d\phi(e_j,0,0))\\
    &\quad h(\nabla^\phi_{\frac{\partial}{\partial s}}d\phi(e_i,0,0),d\phi(e_i,0,0))\\
	&+\sum_{i=1}^m|d\varphi_{t,s}|^{p(x)-2}h(\nabla^\phi_{\frac{\partial }{\partial t}}\nabla^\phi _{\frac{\partial}{\partial s}}d\phi(e_i,0,0),d\phi(e_i,0,0))\\
	&+\sum_{i=1}^m|d\varphi_{t,s}|^{p(x)-2}h(\nabla^{\phi}_{\frac{\partial}{\partial s}}d\phi(e_i,0,0),\nabla^{\phi}_{\frac{\partial}{\partial t}}d\phi(e_i,0,0)).
\end{align*}
 By the definition of the curvature tensor of $(N,h)$ and the properties
  $$\nabla^{\phi}_{\frac{\partial }{\partial t}}d\phi(e_i,0,0)=\nabla^{\phi}_{(e_i,0,0)}d\phi(\frac{\partial}{\partial t}),\quad
  \nabla^{\phi}_{\frac{\partial }{\partial s}}d\phi(e_i,0,0)=\nabla^{\phi}_{(e_i,0,0)}d\phi(\frac{\partial}{\partial s}),$$
 with  $\left[ \frac{\partial}{\partial t},(e_i,0,0)\right]=0$, we obtain the following equation
\begin{align}\label{eq3.7}
	\frac{1}{p(x)}\frac{\partial ^{2}}{\partial t \partial s}|d\varphi_{t,s} |^{p(x)}\bigg|_{t=s=0}
    &\nonumber= \sum_{i=1}^m h\left(\nabla^{\varphi}_{e_i}w,(p(x)-2)|d\varphi|^{p(x)-4}\langle\nabla^{\varphi}v,d\varphi\rangle d\varphi(e_i)\right)\\
	&\nonumber\quad-|d\varphi|^{p(x)-2}\sum_{i=1}^mh({R}^{N}(v,d\varphi(e_i))d\varphi(e_i),w)\\
	&\nonumber\quad+\sum_{i=1}^mh\left(\nabla^{\varphi}_{e_i}\nabla_{\frac{\partial}{\partial t}}^{\phi}d\phi(\frac{\partial}{\partial s})\Big|_{t=s=0},|d\varphi|^{p(x)-2}d\varphi(e_i)\right)\\
	&\quad+\sum_{i=1}^mh\left(\nabla_{e_i}^{\varphi}w,|d\varphi|^{p(x)-2}\nabla^{\varphi}_{e_i}v\right).
\end{align}
Let $\omega_1,\omega_2,\omega_3\in \Gamma(T^*M)$ defined by
\begin{eqnarray*}
\omega_1(X)   &=&  h\left(w,(p(x)-2)|d\varphi|^{p(x)-4}\langle\nabla^{\varphi}v,d\varphi\rangle d\varphi(X)\right);\\
\omega_2(X)   &=&  h\left(\nabla_{\frac{\partial}{\partial t}}^{\phi}d\phi(\frac{\partial}{\partial s})\Big|_{t=s=0},|d\varphi|^{p(x)-2}d\varphi(X)\right):\\
\omega_3(X)   &=&  h\left(w,|d\varphi|^{p(x)-2}\nabla^{\varphi}_{X}v\right),\quad\forall X\in\Gamma(TM).
\end{eqnarray*}
The divergence of $\omega_1,$ $\omega_2,$ and $\omega_3$ are given by
\begin{eqnarray*}
\operatorname{div}^{M}\omega_1   &=& \sum_{i=1}^m e_ih\left(w,(p(x)-2)|d\varphi|^{p(x)-4}\langle\nabla^{\varphi}v,d\varphi\rangle d\varphi(e_i)\right);\\
\operatorname{div}^{M}\omega_2  &=& \sum_{i=1}^m e_ih\left(\nabla_{\frac{\partial}{\partial t}}^{\phi}d\phi(\frac{\partial}{\partial s})\Big|_{t=s=0},|d\varphi|^{p(x)-2}d\varphi(e_i)\right):\\
\operatorname{div}^{M}\omega_3   &=& \sum_{i=1}^m e_ih\left(w,|d\varphi|^{p(x)-2}\nabla^{\varphi}_{e_i}v\right),\quad\forall X\in\Gamma(TM).
\end{eqnarray*}
By equations (\ref{eq3.5}), (\ref{eq3.7}), the $p(\cdot)$-harmonicity condition of $\varphi$, and the divergence Theorem, we obtain
\begin{eqnarray}\label{eq3.8}
\frac{\partial ^{2}}{\partial t\partial s}E_{p(\cdot)}(\varphi_{t,s};D)\bigg|_{t=s=0}
&=&\nonumber-\int_{D}\sum_{i=1}^m h\left(w,\nabla^{\varphi}_{e_i}(p(x)-2)|d\varphi|^{p(x)-4}\langle\nabla^{\varphi}v,d\varphi\rangle d\varphi(e_i)\right)v_g\\
&&\nonumber-\int_{D}|d\varphi|^{p(x)-2}\sum_{i=1}^mh(w,{R}^{N}(v,d\varphi(e_i))d\varphi(e_i))v_g\\
&& -\int_{D}\sum_{i=1}^mh\left(w,\nabla_{e_i}^{\varphi}|d\varphi|^{p(x)-2}\nabla^{\varphi}_{e_i}v\right)v_g.
\end{eqnarray}
The proof is completed.
\end{proof}

If $(M,g)$ is a compact Riemannian manifold, $\varphi$ be a $p(\cdot)$-harmonic map from $(M,g)$ to Riemannian manifold $(N,h)$, and for any vector field $v$ along $\varphi$,
\begin{equation}\label{eq1.16}
I^{\varphi}_{p(\cdot)}(v,v)\equiv\int_{M}h(v,J_{p(\cdot)}^{\varphi}(v))\,v_{g}\geq 0,
\end{equation}
then $\varphi$ is called a stable $p(\cdot)$-harmonic map. Note that, the definition of stable $p(\cdot)$-harmonic maps is a generalization of stable harmonic maps (\cite{YX}), is also a generalization of stable $p$-harmonic maps (\cite{CL,NS}). By using the Green Theorem \cite{baird} it is easy to prove that
\begin{eqnarray}\label{eq3.9}
I^{\varphi}_{p(\cdot)}(v,v)
   &=& -\int_{M}|d\varphi|^{p(x)-2}\sum_{i=1}^mh(v,{R}^{N}(v,d\varphi(e_i))d\varphi(e_i))v_g\\
   &&\nonumber +\int_M |d\varphi|^{p(x)-2}|\nabla^\varphi v|^2 v_g
       +\int_M (p(x)-2)|d\varphi|^{p(x)-4} \langle\nabla^{\varphi}v,d\varphi\rangle^2 v_g.
\end{eqnarray}
From equation (\ref{eq3.9}), we deduce the following result.
\begin{proposition}
Every $p(\cdot)$-harmonic map from compact Riemannian manifold $(M,g)$ to Riemannian manifold $(N,h)$ has $\operatorname{Sect}^N\leq0$ is stable.
\end{proposition}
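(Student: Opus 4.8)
The plan is to read the conclusion directly off the index form \eqref{eq3.9}: I will show that, under the stated hypotheses, each of its three integrands is pointwise non-negative, so that $I^{\varphi}_{p(\cdot)}(v,v)\geq 0$ for every vector field $v$ along $\varphi$, which is precisely the stability condition \eqref{eq1.16}. Two of the three terms require no assumption on the target at all. Since $p(x)\geq 2$ for all $x\in M$, we have $p(x)-2\geq 0$, while $|d\varphi|^{p(x)-2}$, $|\nabla^{\varphi}v|^{2}$, $|d\varphi|^{p(x)-4}$ and $\langle\nabla^{\varphi}v,d\varphi\rangle^{2}$ are each non-negative. Hence the second integrand $|d\varphi|^{p(x)-2}|\nabla^{\varphi}v|^{2}$ and the third integrand $(p(x)-2)|d\varphi|^{p(x)-4}\langle\nabla^{\varphi}v,d\varphi\rangle^{2}$ are both $\geq 0$ pointwise.

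The curvature hypothesis enters only through the first term. Working at a fixed point of $M$ and for a fixed index $i$, I set $X=v$ and $Y=d\varphi(e_i)$. With the sign convention for $R^{N}$ used in \eqref{eq3.3}, the condition $\operatorname{Sect}^{N}\leq 0$ is equivalent to $h(R^{N}(X,Y)Y,X)\leq 0$ for all tangent vectors $X,Y$ at $\varphi(x)$; when $X$ and $Y$ are linearly dependent both sides vanish, so the inequality holds without restriction. Using the symmetry of $h$, this says $h(v,R^{N}(v,d\varphi(e_i))d\varphi(e_i))\leq 0$ for each $i$. Multiplying by the non-negative factor $|d\varphi|^{p(x)-2}$ and summing over $i$ preserves the sign, and the leading minus sign in the first term of \eqref{eq3.9} then turns the first integrand into a non-negative quantity as well.

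Summing the three non-negative integrands and integrating over the compact $M$ yields $I^{\varphi}_{p(\cdot)}(v,v)\geq 0$, so $\varphi$ is stable. Since \eqref{eq3.9} is supplied, there is no substantial computational obstacle left; the one point that genuinely needs care is the curvature sign convention, namely checking that the convention under which \eqref{eq3.3}--\eqref{eq3.9} were derived is indeed the one for which $\operatorname{Sect}^{N}\leq 0$ reads as $h(R^{N}(X,Y)Y,X)\leq 0$. A minor secondary subtlety is the factor $|d\varphi|^{p(x)-4}$ at points where $d\varphi$ vanishes, but there $\langle\nabla^{\varphi}v,d\varphi\rangle$ vanishes to high enough order that the third integrand extends continuously by $0$, so no term is actually singular.
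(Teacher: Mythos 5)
Your proof is correct and follows exactly the route the paper intends: the paper's entire argument is the remark that the proposition can be ``deduced from equation \eqref{eq3.9}'', and you have simply supplied the pointwise sign analysis of its three integrands (non-negativity of the two $p(x)\geq 2$ terms, and the curvature term via $h(R^{N}(X,Y)Y,X)\leq 0$) that the paper leaves implicit. Your added remarks on the curvature sign convention and on the $|d\varphi|^{p(x)-4}$ factor at zeros of $d\varphi$ are sensible precautions but do not change the argument.
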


In the case where the codomain of the stable $p(\cdot)$-harmonic map is the standard sphere $\mathbb{S}^n$, we have the following result.

\begin{theorem}\label{th2:sphere}
Let $(M,g)$ be a compact Riemannian manifold. When $n > 2$,
any stable $p(\cdot)$-harmonic map $\varphi:(M,g)\longrightarrow\mathbb{S}^n$ must be constant,
where $p$ is a smooth positive function on $M$ such that $2\leq p(x)<n$ for all $x\in M$.
\end{theorem}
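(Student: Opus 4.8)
The plan is to adapt the classical averaging argument of Xin (and its $p$-harmonic analogues) by testing the stability inequality (\ref{eq1.16}) against the conformal vector fields of the sphere. Regard $\mathbb{S}^n\subset\mathbb{R}^{n+1}$ and, for each fixed $a\in\mathbb{R}^{n+1}$, introduce the tangential field $V_a(y)=a-\langle a,y\rangle y$, where $\langle\,,\,\rangle$ denotes the Euclidean product and $y$ the position vector (unit outward normal). The only geometric input needed is that $V_a$ is a conformal field satisfying
$$\nabla^{\mathbb{S}^n}_X V_a=-\langle a,y\rangle X,\qquad X\in T_y\mathbb{S}^n,$$
which follows by differentiating $V_a$ in $\mathbb{R}^{n+1}$, using $\nabla^{\mathbb{R}^{n+1}}_X y=X$, and projecting onto $T\mathbb{S}^n$.

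Next I would fix an orthonormal basis $\{a_\alpha\}_{\alpha=1}^{n+1}$ of $\mathbb{R}^{n+1}$, write $\varphi^\alpha=\langle a_\alpha,\varphi\rangle$ for the ambient components of $\varphi$ (so that $\sum_\alpha(\varphi^\alpha)^2=1$), and use the fields $v_\alpha=V_{a_\alpha}\circ\varphi$ along $\varphi$ as admissible test fields. The conformality relation gives $\nabla^\varphi_{e_i}v_\alpha=-\varphi^\alpha\,d\varphi(e_i)$, whence the three elementary identities $|v_\alpha|^2=1-(\varphi^\alpha)^2$, $|\nabla^\varphi v_\alpha|^2=(\varphi^\alpha)^2|d\varphi|^2$, and $\langle\nabla^\varphi v_\alpha,d\varphi\rangle=-\varphi^\alpha|d\varphi|^2$. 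Since $\mathbb{S}^n$ has constant sectional curvature one, $R^N(v_\alpha,d\varphi(e_i))d\varphi(e_i)=|d\varphi(e_i)|^2 v_\alpha-\langle v_\alpha,d\varphi(e_i)\rangle d\varphi(e_i)$, and because $d\varphi(e_i)$ is tangent to the sphere one checks $\langle v_\alpha,d\varphi(e_i)\rangle=\langle a_\alpha,d\varphi(e_i)\rangle$, so that $\sum_\alpha\langle v_\alpha,d\varphi(e_i)\rangle^2=|d\varphi(e_i)|^2$ and $\sum_\alpha|v_\alpha|^2=n$.

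Then I would substitute these into the three terms of $I^\varphi_{p(\cdot)}(v_\alpha,v_\alpha)$ in (\ref{eq3.9}) and sum over $\alpha$. The curvature term contributes $-(n-1)\int_M|d\varphi|^{p(x)}v_g$, the gradient term $+\int_M|d\varphi|^{p(x)}v_g$, and the genuinely $p$-dependent term $+\int_M(p(x)-2)|d\varphi|^{p(x)}v_g$; adding the coefficients $-(n-1)+1+(p(x)-2)$ telescopes to $p(x)-n$, giving
$$\sum_{\alpha=1}^{n+1}I^\varphi_{p(\cdot)}(v_\alpha,v_\alpha)=\int_M\big(p(x)-n\big)\,|d\varphi|^{p(x)}\,v_g.$$
Stability forces each summand on the left to be nonnegative, while the hypothesis $p(x)<n$ forces the right-hand side to be nonpositive; hence both vanish, and since $n-p(x)>0$ everywhere this yields $|d\varphi|\equiv0$, i.e. $\varphi$ is constant.

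I would note that the assumption $n>2$ is precisely what renders the range $2\le p(x)<n$ nonempty, so it is encoded in the sign $p(x)-n<0$ that drives the conclusion. The main bookkeeping hazard is the third, $p$-dependent term of (\ref{eq3.9}): one must verify that $\langle\nabla^\varphi v_\alpha,d\varphi\rangle^2$ sums to $|d\varphi|^4$ so its contribution is exactly $(p(x)-2)|d\varphi|^{p(x)}$ and the coefficients collapse to $p(x)-n$ (the specialization $p\equiv2$ recovering Xin's factor $2-n$). The only other point requiring care is fixing the sign conventions for $R^N$ and for the conformal relation defining $V_a$; beyond that the computation is a routine summation over the index $\alpha$.
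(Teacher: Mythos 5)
Your proposal is correct and follows essentially the same route as the paper's proof: both test stability against the conformal gradient fields $V_a(y)=a-\langle a,y\rangle y=\operatorname{grad}^{\mathbb{S}^n}\langle a,\cdot\rangle$, use $\nabla^{\mathbb{S}^n}_X V_a=-\langle a,y\rangle X$ and the constant-curvature form of $R^{\mathbb{S}^n}$, and average over an orthonormal basis of $\mathbb{R}^{n+1}$ to obtain $\sum_{\alpha}I^{\varphi}_{p(\cdot)}(v_\alpha,v_\alpha)=\int_M\big(p(x)-n\big)|d\varphi|^{p(x)}v_g$, which stability and $p(x)<n$ force to vanish. The only cosmetic difference is that you substitute directly into the integrated quadratic form (\ref{eq3.9}), whereas the paper performs the equivalent pointwise computation with the Jacobi operator and invokes the $p(\cdot)$-harmonicity condition $\tau_{p(\cdot)}(\varphi)=0$ explicitly before tracing over $\alpha$.
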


\begin{proof}
Choose a normal orthonormal frame $\{e_i\}_{i=1}^m$ at point $x$ in $(M,g)$.
We set $\lambda(y)=\langle\alpha,y\rangle_{\mathbb{R}^{n+1}}$, for all $y\in \mathbb{S}^n,$
where $\alpha\in\mathbb{R}^{n+1}$. Let $v=\operatorname{grad}^{\mathbb{S}^n}\lambda$.
We have $\nabla^{\mathbb{S}^n}_{X}v=-\lambda X$ for all $X\in\Gamma(T\mathbb{S}^n)$,
where $\nabla^{\mathbb{S}^n}$ is the Levi-Civita connection on $\mathbb{S}^n$ with respect to the standard
metric of the sphere (see \cite{YX}). We compute
\begin{eqnarray}\label{28}
\sum_{i=1}^m\nabla^{\varphi}_{e_{i}} |d\varphi|^{p(x)-2} \nabla^{\varphi}_{e_{i}}(v\circ\varphi )
&=& \nonumber \nabla^{\varphi}_{\operatorname{grad}^{M}|d\varphi|^{p(x)-2}}(v\circ\varphi) \\
&&+\sum_{i=1}^m |d\varphi|^{p(x)-2}\nabla^{\varphi}_{e_{i}} \nabla^{\varphi}_{e_{i}} (v\circ\varphi ).
\end{eqnarray}
By using the property $\nabla^{\mathbb{S}^{n}}_{X}v = -\lambda X$,
the first term of \eqref{28} is given by
\begin{equation}\label{29}
\nabla^{\varphi}_{\operatorname{grad}^{M}|d\varphi|^{p(x)-2}}(v\circ\varphi) = -(\lambda \circ\varphi)d\varphi(\operatorname{grad}^{M}|d\varphi|^{p(x)-2}),
\end{equation}
and the seconde term of \eqref{28} is given by
\begin{eqnarray}\label{30}
\sum_{i=1}^m|d\varphi|^{p(x)-2}\nabla^{\varphi}_{e_{i}} \nabla^{\varphi}_{e_{i}}(v\circ\varphi )
& = &-\sum_{i=1}^m |d\varphi|^{p(x)-2}\nabla^{\varphi}_{e_{i}}(\lambda \circ\varphi)d\varphi (e_{i})\nonumber \\
&= &\nonumber -\sum_{i=1}^m|d\varphi|^{p(x)-2}<d\varphi(e_{i}),v\circ\varphi>d\varphi(e_{i})\\
&& -(\lambda \circ\varphi)|d\varphi|^{p(x)-2}\tau(\varphi).
\end{eqnarray}
Substituting the formulas \eqref{29} and \eqref{30} in \eqref{28} gives
\begin{eqnarray}\label{32}
\sum_{i=1}^m\nabla^{\varphi}_{e_{i}}|d\varphi|^{p(x)-2} \nabla^{\varphi}_{e_{i}}( v\circ\varphi )
& =&\nonumber -(\lambda \circ\varphi)d\varphi(\operatorname{grad}^{M}|d\varphi|^{p(x)-2})\\
&&- \sum_{i=1}^m|d\varphi|^{p(x)-2}<d\varphi(e_{i}),v\circ\varphi>d\varphi(e_{i}) \nonumber\\
&& -(\lambda \circ\varphi)|d\varphi|^{p(x)-2}\tau(\varphi).
\end{eqnarray}
By the  $p(\cdot)$-harmonicity condition of $\varphi$
$$\tau_{p(\cdot)}(\varphi)=|d\varphi|^{p(x)-2}\tau(\varphi)+d\varphi(\operatorname{grad}^M|d\varphi|^{p(x)-2})=0,$$
and equation \eqref{32}, we get
\begin{eqnarray}\label{33}
\sum_{i=1}^m\langle\nabla^{\varphi}_{e_{i}}|d\varphi|^{p(x)-2} \nabla^{\varphi}_{e_{i}}(v\circ\varphi ), v\circ\varphi\rangle
& = &\nonumber - \sum_{i=1}^m |d\varphi|^{p(x)-2}\langle d\varphi(e_{i}),v\circ\varphi\rangle^2.\\
\end{eqnarray}
Since the sphere $\mathbb{S}^{n}$ has constant curvature, we have
\begin{eqnarray}\label{34}
\sum_{i=1}^m\langle|d\varphi|^{p(x)-2}R^{\mathbb{S}^{n}}(v\circ\varphi, d\varphi(e_{i}))d\varphi(e_{i}),v\circ\varphi\rangle
= |d\varphi|^{p(x)}\langle v\circ\varphi, v\circ\varphi\rangle \nonumber   \\
-\sum_{i=1}^m|d\varphi|^{p(x)-2}\langle d\varphi(e_{i}), v\circ\varphi\rangle^2.\qquad
\end{eqnarray}
By the definition of generalized Jacobi operator, and  \eqref{33}, \eqref{34}, we obtain
\begin{eqnarray}\label{35}
\langle J_{f}^{\varphi}(v\circ\varphi), v\circ\varphi\rangle
& =&\nonumber  2|d\varphi|^{p(x)-2}\sum_{i=1}^m\langle d\varphi(e_{i}),v\circ\varphi\rangle^2\\
&&\nonumber - |d\varphi|^{p(x)}\langle v\circ\varphi, v\circ\varphi\rangle   \\
&&\nonumber-\sum_{i=1}^m\langle\nabla^\varphi_{e_i}(p(x)-2)|d\varphi|^{p(x)-4}\langle\nabla^\varphi v\circ\varphi  ,d\varphi\rangle d\varphi(e_i),v\circ\varphi\rangle,\\
\end{eqnarray}
Using $\langle\nabla^{\varphi}v\circ\varphi, d\varphi\rangle=-(\lambda\circ\varphi)|d\varphi|^2$, and equation (\ref{35}), we find that
\begin{eqnarray}\label{36}
\operatorname{trace}_\alpha  \langle J_{f}^{\varphi}(v\circ\varphi), v\circ\varphi\rangle
& = & (p(x)-n)|d\varphi|^{p(x)}.
\end{eqnarray}
Hence Theorem \ref{th2:sphere} follows from \eqref{36}, and the stable $p(\cdot)$-harmonicity condition of $\varphi$,
with $2\leq p(x)<n$ for all $x\in M$.
\end{proof}


\section{$p(\cdot)$-Biharmonic Maps}
Let $\varphi:(M,g)\longrightarrow (N,h)$ be a smooth map between two Riemannian manifolds, the $p(\cdot)$-bienergy of $\varphi$ is defined by
\begin{equation}\label{eq4.1}
E_{2,p(\cdot)}(\varphi;D)=\frac{1}{2}\int_{D}|\tau_{p(\cdot)}(\varphi)|^{2}v_g,
\end{equation}
where $p\geq2$ is a smooth function on $M$, and $D$ a compact subset of $M$. A smooth map $\varphi$ is called $p(\cdot)$-biharmonic if it is a critical point of the $p(\cdot)$-bienergy functional for any compact domain $D$.
\begin{theorem}[The first variation of the $p(\cdot)$-bienergy]\label{th6}
Let $\varphi$ be a smooth map between two Riemannian manifolds $(M,g)$ and $(N,h)$. Then we have
	\begin{equation}\label{eq4.2}
	\frac{d}{dt}E_{2,p(\cdot)}(\varphi_{t};D)\bigg|_{t=0}=-\int_{D}h(v,\tau_{2,p(\cdot)}(\varphi))v_g,
	\end{equation}
where $\{\varphi_{t}\}_{t\in(-\epsilon,\epsilon)}$ is a smooth variation of $\varphi$ supported in $D$,
$v=\frac{d\varphi_{t}}{dt}\big|_{t=0}$ denotes the variation vector field, and
$\tau_{2,p(\cdot)}(\varphi)$ the $p(\cdot)$-bitension field of $\varphi$ given by
	\begin{align}\label{eq4.3}
 \tau_{2,p(\cdot)}(\varphi)&\nonumber=-|d\varphi|^{p(x)-2}\operatorname{trace}_gR^{N}(\tau_{p(\cdot)}(\varphi),d\varphi)d\varphi
    -\operatorname{trace}_g\nabla^{\varphi}|d\varphi|^{p(x)-2}\nabla^{\varphi}\tau_{p(\cdot)}(\varphi)\\
	&\quad-\operatorname{trace}_g\nabla(p(x)-2)|d\varphi|^{p(x)-4}\langle\nabla^{\varphi}\tau_{p(\cdot)}(\varphi),d\varphi\rangle d\varphi.
	\end{align}
\end{theorem}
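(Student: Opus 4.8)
The plan is to differentiate the $p(\cdot)$-bienergy directly and to recognize that the $p(\cdot)$-bitension field \eqref{eq4.3} is nothing but the generalized Jacobi operator \eqref{eq3.3} evaluated at the $p(\cdot)$-tension field, i.e. $\tau_{2,p(\cdot)}(\varphi)=J^{\varphi}_{p(\cdot)}(\tau_{p(\cdot)}(\varphi))$. Introducing $\phi(x,t)=\varphi_t(x)$ and writing $|\tau_{p(\cdot)}(\varphi_t)|^2=h(\tau_{p(\cdot)}(\varphi_t),\tau_{p(\cdot)}(\varphi_t))$, I would first pull the $t$-derivative inside the integral to obtain
\begin{equation*}
\frac{d}{dt}E_{2,p(\cdot)}(\varphi_t;D)\Big|_{t=0}=\int_D h\big(\nabla^{\phi}_{\frac{\partial}{\partial t}}\tau_{p(\cdot)}(\varphi_t)\big|_{t=0},\tau_{p(\cdot)}(\varphi)\big)v_g,
\end{equation*}
so that everything reduces to identifying the linearization $L(v):=\nabla^{\phi}_{\frac{\partial}{\partial t}}\tau_{p(\cdot)}(\varphi_t)\big|_{t=0}$ of the $p(\cdot)$-tension field.

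To compute $L(v)$, I would fix $x\in M$, choose a frame $\{e_i\}$ with $\nabla^M_{e_i}e_j=0$ at $x$, and write $\tau_{p(\cdot)}(\varphi_t)=\sum_i\nabla^{\phi}_{e_i}\big(|d\varphi_t|^{p(x)-2}d\phi(e_i)\big)$ at $x$. Applying $\nabla^{\phi}_{\frac{\partial}{\partial t}}$ and commuting it past $\nabla^{\phi}_{e_i}$ via the curvature identity $\nabla^{\phi}_{\frac{\partial}{\partial t}}\nabla^{\phi}_{e_i}Y-\nabla^{\phi}_{e_i}\nabla^{\phi}_{\frac{\partial}{\partial t}}Y=R^N(d\phi(\tfrac{\partial}{\partial t}),d\phi(e_i))Y$ (legitimate since $[\tfrac{\partial}{\partial t},(e_i,0)]=0$), together with $\nabla^{\phi}_{\frac{\partial}{\partial t}}d\phi(e_i)\big|_{t=0}=\nabla^{\varphi}_{e_i}v$ and the chain rule $\frac{\partial}{\partial t}|d\varphi_t|^{p(x)-2}\big|_{t=0}=(p(x)-2)|d\varphi|^{p(x)-4}\langle\nabla^{\varphi}v,d\varphi\rangle$, I expect the resulting terms to assemble exactly into $L(v)=-J^{\varphi}_{p(\cdot)}(v)$, matching \eqref{eq3.3} group by group: the curvature group, the $|d\varphi|^{p(x)-2}\nabla^{\varphi}v$ group, and the $(p(x)-2)|d\varphi|^{p(x)-4}\langle\nabla^{\varphi}v,d\varphi\rangle d\varphi$ group.

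It then remains to transfer the operator from $v$ onto $\tau_{p(\cdot)}(\varphi)$. I would do this by integration by parts, introducing for each of the three terms of $J^{\varphi}_{p(\cdot)}$ a one-form on $M$ (exactly as the forms $\omega_1,\omega_2,\omega_3$ in the proof of the second variation) and applying the divergence theorem, the boundary contributions vanishing because the variation is supported in the compact domain $D$. Using the symmetries of $R^N$ for the curvature term and one integration by parts for each of the two derivative terms shows that $J^{\varphi}_{p(\cdot)}$ is formally self-adjoint, whence
\begin{equation*}
\int_D h(-J^{\varphi}_{p(\cdot)}(v),\tau_{p(\cdot)}(\varphi))v_g=-\int_D h(v,J^{\varphi}_{p(\cdot)}(\tau_{p(\cdot)}(\varphi)))v_g=-\int_D h(v,\tau_{2,p(\cdot)}(\varphi))v_g,
\end{equation*}
which is the claim. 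I expect the main obstacle to be the linearization step: one must keep careful track of the extra curvature term produced by the commutation and of the $\langle\nabla^{\varphi}v,d\varphi\rangle$ contribution coming from differentiating the weight $|d\varphi_t|^{p(x)-2}$, since these are precisely the terms that distinguish the $p(\cdot)$-setting from the classical harmonic case and that must land with the correct signs to reproduce \eqref{eq3.3}.
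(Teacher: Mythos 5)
Your proposal is correct and follows essentially the same route as the paper: both differentiate under the integral to reduce everything to $\nabla^{\phi}_{\frac{\partial}{\partial t}}\tau_{p(\cdot)}(\varphi_t)\big|_{t=0}$, commute the derivatives via the curvature identity, differentiate the weight $|d\varphi_t|^{p(x)-2}$ by the chain rule, and then integrate by parts using auxiliary one-forms and the divergence theorem. The only difference is organizational: you package the integration by parts as formal self-adjointness of $J^{\varphi}_{p(\cdot)}$ (so that $\tau_{2,p(\cdot)}(\varphi)=J^{\varphi}_{p(\cdot)}(\tau_{p(\cdot)}(\varphi))$, exactly as in the paper's subsequent remark), whereas the paper carries out the same divergence-theorem computation inline with the one-forms $\eta_1,\dots,\eta_4$.
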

\begin{proof}
		Define $\phi:M\times (-\epsilon,\epsilon)\longrightarrow N$ by $\phi(x,t)=\varphi_{t}(x)$. First, note that
\begin{equation}\label{eq4.4}
    \frac{d}{dt}E_{2,p(\cdot)}(\varphi_{t};D)
    =\int_{D}h(\nabla^{\phi}_{\frac{\partial}{\partial {t}}}\tau_{p(\cdot)}(\varphi_{t}),\tau_{p(\cdot)}(\varphi_{t}))\,v_{g}.
\end{equation}
Calculating in a normal frame at $x\in M$, we have
\begin{equation}\label{eq4.5}
    \nabla^{\phi}_{\frac{\partial}{\partial {t}}}\tau_{p(\cdot)}(\varphi_{t})
    =\sum_{i=1}^m\nabla^{\phi}_{\frac{\partial}{\partial {t}}}\nabla^{\phi}_{(e_i,0)}|d\varphi_t|^{p(x)-2}d\phi(e_i,0).
\end{equation}
From the definition of the curvature tensor of $(N,h)$, we obtain\\

$\displaystyle\sum_{i=1}^m\nabla^{\phi}_{\frac{\partial}{\partial {t}}}\nabla^{\phi}_{(e_i,0)}|d\varphi_t|^{p(x)-2}d\phi(e_i,0)$
\begin{eqnarray}\label{eq4.6}
&=&\nonumber|d\varphi_t|^{p(x)-2}\sum_{i=1}^mR^{N}(d\phi(\frac{\partial}{\partial {t}}),d\phi(e_i,0))d\phi(e_i,0)\\
&&+ \sum_{i=1}^m\nabla^{\phi}_{(e_i,0)}\nabla^{\phi}_{\frac{\partial}{\partial {t}}}|d\varphi_t|^{p(x)-2}d\phi(e_i,0).
\end{eqnarray}
By using the compatibility of $\nabla^{\phi}$ with $h$,  we find that\\

$\displaystyle \sum_{i=1}^mh(\nabla^{\phi}_{(e_i,0)}\nabla^{\phi}_{\frac{\partial}{\partial {t}}}|d\varphi_t|^{p(x)-2}d\phi(e_i,0),\tau_{p(\cdot)}(\varphi_{t}))$
\begin{eqnarray}\label{eq4.7}
   &=&\nonumber \sum_{i=1}^m(e_i,0)\left[h(\nabla^{\phi}_{\frac{\partial}{\partial {t}}}|d\varphi_t|^{p(x)-2}d\phi(e_i,0),\tau_{p(\cdot)}(\varphi_{t}))\right] \\
   & &- \sum_{i=1}^mh(\nabla^{\phi}_{\frac{\partial}{\partial {t}}}|d\varphi_t|^{p(x)-2}d\phi(e_i,0),\nabla^{\phi}_{(e_i,0)}\tau_{p(\cdot)}(\varphi_{t})).\quad\quad\quad
\end{eqnarray}
From the property $\nabla^\phi_X d\phi(Y)=\nabla^\phi_Y d\phi(X)+d\phi([X,Y])$, with $X=\frac{\partial}{\partial {t}}$ and $Y=|d\varphi_t|^{p(x)-2}(e_i,0)$, we get\\

$\nabla^{\phi}_{\frac{\partial}{\partial {t}}}|d\varphi_t|^{p(x)-2}d\phi(e_i,0)\Big|_{t=0}$
\begin{eqnarray}\label{eq4.8}
    &=&\nonumber|d\varphi|^{p(x)-2}\nabla^\varphi _{e_i} v\\
    &&+\sum_{j=1}^m(p(x)-2)|d\varphi|^{p(x)-4}h(\nabla^\varphi_{e_j} v,d\varphi(e_j))d\varphi(e_i),
\end{eqnarray}
for all $i=1,...,m$. Substituting (\ref{eq4.8}) in (\ref{eq4.7}), we have\\

$\displaystyle \sum_{i=1}^mh(\nabla^{\phi}_{(e_i,0)}\nabla^{\phi}_{\frac{\partial}{\partial {t}}}|d\varphi_t|^{p(x)-2}d\phi(e_i,0),\tau_{p(\cdot)}(\varphi_{t}))\Big|_{t=0}$
\begin{eqnarray}\label{eq4.9}
   &=&\nonumber \sum_{i=1}^me_ih(|d\varphi|^{p(x)-2}\nabla^\varphi _{e_i} v,\tau_{p(\cdot)}(\varphi))\\
   &&\nonumber +\sum_{i=1}^me_ih((p(x)-2)|d\varphi|^{p(x)-4}\langle\nabla^\varphi v,d\varphi\rangle d\varphi(e_i),\tau_{p(\cdot)}(\varphi)) \\
   &&\nonumber - \sum_{i=1}^me_ih( v,|d\varphi|^{p(x)-2}\nabla^{\varphi}_{e_i}\tau_{p(\cdot)}(\varphi))\\
   &&\nonumber + \sum_{i=1}^mh(v,\nabla^\varphi _{e_i}|d\varphi|^{p(x)-2}\nabla^{\varphi}_{e_i}\tau_{p(\cdot)}(\varphi))\\
   &&\nonumber - \sum_{j=1}^me_jh(v,(p(x)-2)|d\varphi|^{p(x)-4}\langle\nabla^\varphi \tau_{p(\cdot)}(\varphi),d\varphi\rangle d\varphi(e_j))\\
   && + \sum_{j=1}^mh(v,\nabla^{\varphi}_{e_j}(p(x)-2)|d\varphi|^{p(x)-4}\langle\nabla^\varphi \tau_{p(\cdot)}(\varphi),d\varphi\rangle d\varphi(e_j)).
\end{eqnarray}
Let $\eta_1,\eta_2,\eta_3,\eta_4\in\Gamma(T^*M)$ defined by
\begin{eqnarray*}
 \eta_1(X)  &=& h(|d\varphi|^{p(x)-2}\nabla^\varphi _{X} v,\tau_{p(\cdot)}(\varphi)); \\
 \eta_2(X)  &=& h((p(x)-2)|d\varphi|^{p(x)-4}\langle\nabla^\varphi v,d\varphi\rangle d\varphi(X),\tau_{p(\cdot)}(\varphi)); \\
 \eta_3(X)  &=& h( v,|d\varphi|^{p(x)-2}\nabla^{\varphi}_{X}\tau_{p(\cdot)}(\varphi)); \\
 \eta_4(X)  &=& h(v,(p(x)-2)|d\varphi|^{p(x)-4}\langle\nabla^\varphi \tau_{p(\cdot)}(\varphi),d\varphi\rangle d\varphi(X)).
\end{eqnarray*}
Finally, we calculate the divergence of $\eta_i$ $(i=1,...,4)$ and substituting in (\ref{eq4.9}).
The proof of Theorem \ref{th6} follows by (\ref{eq4.4})-(\ref{eq4.6}), (\ref{eq4.9}), and the divergence Theorem.		
\end{proof}
From Theorem \ref{th6}, we deduce:
\begin{corollary}\label{co7}
Let $\varphi:(M,g)\rightarrow (N,h)$ be a smooth map between Riemannian manifolds. Then, $\varphi$ is $p(\cdot)$-biharmonic if and only if
\begin{eqnarray*}
\tau_{2,p(\cdot)}(\varphi)
   &=& -|d\varphi|^{p(x)-2}\operatorname{trace}_gR^{N}(\tau_{p(\cdot)}(\varphi),d\varphi)d\varphi\\
    &&   -\operatorname{trace}_g\nabla^\varphi |d\varphi|^{p(x)-2} \nabla^\varphi \tau_{p(\cdot)}(\varphi)\\
    &&-\operatorname{trace}_g\nabla (p(x)-2)|d\varphi|^{p(x)-4}\langle\nabla^\varphi\tau_{p(\cdot)}(\varphi),d\varphi\rangle d\varphi=0.
\end{eqnarray*}
\end{corollary}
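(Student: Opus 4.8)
The plan is to obtain Corollary~\ref{co7} as an immediate consequence of Theorem~\ref{th6} via the fundamental lemma of the calculus of variations; no new computation is needed, since the entire analytic content already resides in the first variation formula \eqref{eq4.2}. By definition, $\varphi$ is $p(\cdot)$-biharmonic precisely when it is a critical point of $E_{2,p(\cdot)}(\cdot;D)$ for every compact domain $D\subset M$, that is, when $\frac{d}{dt}E_{2,p(\cdot)}(\varphi_t;D)\big|_{t=0}=0$ for every smooth variation $\{\varphi_t\}$ supported in $D$. Theorem~\ref{th6} rewrites this derivative as $-\int_D h(v,\tau_{2,p(\cdot)}(\varphi))\,v_g$, where $v$ is the associated variation vector field, so the whole statement reduces to deciding when this integral vanishes for all admissible $v$.

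One direction is immediate. If $\tau_{2,p(\cdot)}(\varphi)=0$, then \eqref{eq4.2} shows that $\frac{d}{dt}E_{2,p(\cdot)}(\varphi_t;D)\big|_{t=0}=0$ for every variation and every $D$; hence $\varphi$ is a critical point of the $p(\cdot)$-bienergy and therefore $p(\cdot)$-biharmonic.

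For the converse I would argue that an arbitrary section $v\in\Gamma(\varphi^{-1}TN)$ with support in a compact domain $D$ can be realized as a variation vector field: setting $\varphi_t(x)=\exp^N_{\varphi(x)}(t\,v(x))$ produces a smooth variation supported in $D$ whose variation vector field is exactly $v$. Assuming $\varphi$ is $p(\cdot)$-biharmonic, Theorem~\ref{th6} then gives $\int_D h(v,\tau_{2,p(\cdot)}(\varphi))\,v_g=0$ for every such $v$. Applying the fundamental lemma, concretely by choosing $v=f\,\tau_{2,p(\cdot)}(\varphi)$ for a nonnegative bump function $f$ concentrated near any prescribed point, forces $h(\tau_{2,p(\cdot)}(\varphi),\tau_{2,p(\cdot)}(\varphi))=0$ pointwise, hence $\tau_{2,p(\cdot)}(\varphi)=0$ on $M$.

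There is essentially no hard step in the corollary itself: the difficulty has been discharged in Theorem~\ref{th6}, whose derivation of $\tau_{2,p(\cdot)}$ through repeated use of the curvature tensor, the symmetry $\nabla^\phi_X d\phi(Y)=\nabla^\phi_Y d\phi(X)+d\phi([X,Y])$, and the divergence theorem is the genuinely technical part. The only points demanding a little care are confirming that the exponential-map variation is admissible, namely compactly supported and jointly smooth in $(x,t)$, and invoking the fundamental lemma correctly, both of which are routine.
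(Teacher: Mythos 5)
Your proposal is correct and follows exactly the route the paper takes: the paper states Corollary~\ref{co7} as an immediate deduction from the first variation formula of Theorem~\ref{th6}, which is precisely your argument, with the standard details (realizing an arbitrary compactly supported section $v\in\Gamma(\varphi^{-1}TN)$ via $\varphi_t(x)=\exp^N_{\varphi(x)}(t\,v(x))$ and applying the fundamental lemma with $v=f\,\tau_{2,p(\cdot)}(\varphi)$) spelled out rather than left implicit. Your version is simply a more complete write-up of the same deduction.
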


\begin{remark}
For any smooth map $\varphi:(M,g)\longrightarrow(N,h)$ between two Riemannian manifolds, we have
$$\tau_{2,p(\cdot)}(\varphi)=J^\varphi_{p(\cdot)}(\tau_{p(\cdot)}(\varphi)).$$
We can extract several examples of $p(\cdot)$-biharmonic non $p(\cdot)$-harmonic maps $\varphi:(M,g)\longrightarrow\mathbb{R}^n$ where the $p(\cdot)$-tension field
is parallel along $\varphi$, i.e., the components of $\tau_{p(\cdot)}(\varphi)$ are constants.
\end{remark}

\begin{example}
Let $M=\{(x,y,z)\in\mathbb{R}^3,\,\sqrt{x^2+y^2}>2\}$. The smooth map $\varphi:M\longrightarrow\mathbb{R}^2$ defined by
 $$\varphi(x,y,z)=(\sqrt{x^2+y^2},z),\quad\forall (x,y,z)\in M,$$ is $p(\cdot)$-biharmonic non $p(\cdot)$-harmonic, where $$p(x,y,z)=\frac{\ln(x^2+y^2)}{\ln(2)},$$ for all $(x,y,z)\in M$. Here, $\tau_{p(\cdot)}(\varphi)=(1,0)$.
\end{example}

\end{document}